\begin{document}

\title*{Necessary Optimality Condition for
a Discrete Dead Oil Isotherm Optimal Control Problem}

\titlerunning{Necessary Optimality Condition for
a Discrete Dead Oil Isotherm Problem}

\author{Moulay Rchid Sidi Ammi\inst{1}\and
Delfim F. M. Torres\inst{2}}

\institute{Department of Mathematics, University of Aveiro,
3810-193 Aveiro, Portugal \texttt{sidiammi@ua.pt} \and
Department of Mathematics, University of Aveiro, 3810-193 Aveiro,
Portugal \texttt{delfim@ua.pt}}

\maketitle


\begin{abstract}
We obtain necessary optimality conditions for a semi-discretized
optimal control problem for the classical system of nonlinear
partial differential equations modelling the water-oil (isothermal
dead-oil model).

\keywords{extraction of hydrocarbons; dead oil isotherm problem;
optimality conditions.}
\end{abstract}


\section{Introduction}

We study an optimal control problem in the discrete case whose
control system is given by the following system of nonlinear partial differential equations,
\begin{equation}
\label{P}
\begin{cases}
\partial_t u - \Delta \varphi(u) = div\left(g(u) \nabla p\right)
& \text{ in } Q_T = \Omega \times (0,T) \, , \\
\partial_t p - div\left(d(u) \nabla p\right) = f
& \text{ in } Q_T = \Omega \times (0,T) \, , \\
\left.u\right|_{\partial \Omega} = 0 \, , \quad \left.u\right|_{t=0} = u_0 \, , \\
\left.p\right|_{\partial \Omega} = 0 \, , \quad \left.p\right|_{t=0}
= p_0 \, ,
\end{cases}
\end{equation}
which result from a well established model for oil engineering
within the framework of the mechanics of a continuous medium
\cite{mad}. The domain $\Omega$ is an open bounded set in
$\mathbb{R}^2$ with a sufficiently smooth boundary. Further
hypotheses on the data of the problem will be specified later.

At the time of the first run of a layer, the flow of the crude oil
towards the surface is due to the energy stored in the gases under
pressure in the natural hydraulic system. To mitigate the
consecutive decline of production and the decomposition of the
site, water injections are carried out, well before the normal
exhaustion of the layer. The water is injected through wells with
high pressure, by pumps specially drilled to this end. The pumps
allow the displacement of the crude oil towards the wells of
production. More precisely, the problem consists in seeking the
admissible control parameters which minimize a certain objective
functional. In our problem, the main goal is to distribute
properly the wells in order to have the best extraction of the
hydrocarbons. For this reason, we consider a cost functional
containing different parameters arising in the process. To
address the optimal control problem, we use the Lagrangian method
to derive an optimality system: from the cost function we introduce a Lagrangian; then, we calculate the G\^ateaux derivative of the
Lagrangian with respect to its variables. This technique was
used, in particular, by A.~Masserey\ et al. for electromagnetic
models of induction heating \cite{Bodart,Masserey}, and by
H.-C.~Lee and T.~Shilkin for the thermistor problem
\cite{LeeShilkin}.

We consider the following cost functional:
\begin{equation}
\label{eq:cf} J(u,p,f) = \frac{1}{2} \left\|u -
U\right\|_{2,Q_T}^2 + \frac{1}{2} \left\|p - P\right\|_{2,Q_T}^2 +
\frac{\beta_1}{2} \left\|f\right\|_{2 q_0,Q_T}^{2 q_0} +
\frac{\beta_2}{2} \left\|\partial_t f\right\|_{2,Q_T}^{2} \, .
\end{equation}
The control parameters are the reduced saturation of oil $u$, the
pressure $p$, and $f$. The coefficients $\beta_1 > 0$ and $\beta_2
\geq 0$ are two coefficients of penalization, and  $q_0 > 1$.  The
first two terms in \eqref{eq:cf} allow to minimize the difference
between the reduced saturation of oil $u$, the global pressure $p$
and the given data $U$ and $P$. The third and fourth terms are used
to improve the quality of exploitation of the crude oil. We take
$\beta_2 = 0$ just for the sake of simplicity. It is important to emphasize that our
choice of the cost function is not unique. One can always add
additional terms of penalization to take into account other
properties which one may wish to control. Recently, we proved in
\cite{ds1} results of existence, uniqueness, and regularity of the
optimal solutions to the problem of minimizing \eqref{eq:cf} subject
to \eqref{P}, using the theory of parabolic problems
\cite{lsu,Lions}. Here, our goal is to obtain necessary optimality
conditions which may be easily implemented on a computer. More
precisely, we address the problem of obtaining necessary optimality
conditions for the semi-discretized time problem. 

In order to be able to solve problem \eqref{P}-\eqref{eq:cf}
numerically, we use discretization of the problem in time by a
method of finite differences. For a fixed real $N$, let $\tau=
\frac{T}{N}$ be the step of a uniform partition of the interval
$[0, T]$ and $t_{n}=n\tau$, $n=1, \ldots ,N$. We denote by $u^{n}$
an approximation of $u$. The discrete cost functional is then
defined as follows:
\begin{equation}
\label{eq:cf1} J(u^{n},p^{n},f^{n}) = \frac{\tau}{2}\sum_{n=1}^{N}
\int_{\Omega} \big\{ \left\|u^{n} - U\right\|_{2,\Omega}^2 +
 \left\|p^{n} - P\right\|_{2,\Omega}^2 +\beta_1
\left\|f^{n}\right\|_{2 q_0,\Omega}^{2 q_0} \big\} \, dx \, .
\end{equation}
It is now possible to state our optimal control problem: find
$(\bar{u}^{n},\bar{p}^{n},\bar{f}^{n})$ which minimizes
\eqref{eq:cf1} among all functions $(u^{n},p^{n},f^{n})$
satisfying
\begin{equation}
\label{Pn}
\begin{cases}
\frac{u^{n+1}-u^{n}}{\tau} - \Delta \varphi(u^{n})
= div\left(g(u^{n}) \nabla p\right)
& \text{ in } \Omega  \, , \\
 \frac{p^{n+1}-p^{n}}{\tau}- div\left(d(u^{n}) \nabla p^{n}\right) = f^{n}
& \text{ in }  \Omega  \, , \\
\left.u\right|_{\partial \Omega} = 0 \, , \quad \left.u\right|_{t=0} = u_0 \, , \\
\left.p\right|_{\partial \Omega} = 0 \, , \quad
\left.p\right|_{t=0} = p_0 \, .
\end{cases}
\end{equation}
The soughtafter necessary optimality conditions are proved in
\S\ref{sec:MR} under suitable hypotheses on the data of the
problem.


\section{Notation, hypotheses, and functional spaces}

Our main objective is to obtain necessary conditions for a triple
$\left(\bar{u}^{n},\bar{p}^{n},\bar{f}^{n}\right)$ to minimize
\eqref{eq:cf1} among all the functions
$\left(u^{n},p^{n},f^{n}\right)$ verifying \eqref{Pn}. In the sequel we assume that $\varphi$, $g$ and $d$ are real valued functions, respectively of class
$C^3$, $C^2$ and $C^1$, satisfying:

\begin{description}

\item[(H1)] $0 < c_1 \le d(r)$, $\varphi(r) \le c_2$;
$|d'(r)|,\, |\varphi'(r)|,\, |\varphi''(r)| \leq c_{3}\,  \quad
\forall r \in \mathbb{R}$.

\item[(H2)] $u_0$, $p_0$ $\in C^2\left(\bar{\Omega}\right)$, and
$U,\, P \in L^{2}(\Omega)$, where
  $u_0,\, p_0, \, U, \, P :
\Omega \rightarrow \mathbb{R}$,  and $\left.u_0\right|_{\partial
\Omega} = \left.p_0\right|_{\partial \Omega} = 0$.

\end{description}

We consider the following spaces:
\begin{equation*}
W_p^{1}(\Omega) := \left\{ u \in L^p(\Omega), \, \nabla u \in
L^p(\Omega) \right\} \, ,
\end{equation*}
endowed with the norm $\left\|u\right\|_{W_p^{1}(\Omega)} =
\left\|u\right\|_{p,\Omega} + \left\|\nabla u\right\|_{p,\Omega}$;
\begin{equation*}
W_p^{2}(\Omega) := \left\{ u \in W_p^{1}(\Omega), \, \nabla^2 u \,
\in L^p(\Omega) \right\} \, ,
\end{equation*}
with the norm $\left\|u\right\|_{W_p^{2}(\Omega)} =
\left\|u\right\|_{W_p^{1}(\Omega)} + \left\|\nabla^2
u\right\|_{p,\Omega} $; and the following notation:
\begin{gather*}
W :=  \stackrel{\circ}{W}_{2 q}^{2}(\Omega) \,
 ; \\
\Upsilon :=  L^{2 q}(\Omega)  \, ; \\
H := L^{2 q}(\Omega) \times \stackrel{\circ}{W}_{2 q}^{2 -
\frac{1}{q}}(\Omega) \, .
\end{gather*}


\section{Main results}
\label{sec:MR}

We define the following nonlinear operator corresponding to
\eqref{Pn}:
\begin{gather*}
F : W \times W \times \Upsilon \longrightarrow H \times H \\
\left(u^{n},p^{n},f^{n}\right) \longrightarrow
F(u^{n},p^{n},f^{n}) \, ,
\end{gather*}
where
\begin{equation*}
F\left(u^{n},p^{n},f^{n}\right) =
\left(%
\begin{array}{cc}
\frac{u^{n+1}-u^{n}}{\tau}  - \Delta \varphi(u^{n})
- div(g(u^{n}) \nabla p^{n}), & \gamma_0 u^{n} - u_0 \\
\frac{u^{n+1}-u^{n}}{\tau} - div\left(d(u^{n})
\nabla p^{n}\right) - f^{n}, & \gamma_0 p^{n} - p_0 \\
\end{array}%
\right) \, ,
\end{equation*}
$\gamma_0$ being the trace operator $\gamma_0 u^{n} =
\left.u\right|_{t=0}$. Our hypotheses ensure that $F$ is well
defined.


\subsection{G\^{a}teaux differentiability}

\begin{theorem}
\label{thm5.1} In addition to the hypotheses (H1) and (H2), let us
suppose that
\begin{description}
\item[(H3)] $\left|\varphi'''\right| \le c$.
\end{description}
Then, the operator $F$ is G\^{a}teaux differentiable and for all
$(e,w,h) \in W \times W \times \Upsilon$ its derivative is given
by
\begin{equation*}
\begin{split}
\delta F(u^{n},p^{n},f^{n})(e,w,h) &= \frac{d}{ds} F\left(u^{n} +
s e, p^{n} + s w, f^{n} + s h\right)\left.\right|_{s=0} \\
&= \left(\delta F_1, \delta F_2\right)
= \left(%
\begin{array}{cc}
\xi_1 , & \xi_2  \\
\xi_3 , & \xi_4 \\
\end{array}%
\right) \, ,
\end{split}
\end{equation*}
$\xi_1 = e - div\left(\varphi'(u^{n})\nabla e\right) -
div\left(\varphi''(u^{n}) e \nabla u^{n}\right)$ $-
div\left(g(u^{n}) \nabla w\right)$ $- div\left(g'(u^{n}) e \nabla
p^{n}\right)$, $\xi_2 = \gamma_0 e$, $\xi_3 =  w -
div\left(d(u^{n}) \nabla w\right) - div\left(d'(u^{n}) e \nabla
p^{n}\right) - h$, $\xi_4 = \gamma_0 w$. Furthermore, for any
optimal solution
$\left(\bar{u}^{n},\bar{p}^{n},\bar{f}^{n}\right)$ of the problem
of minimizing \eqref{eq:cf1} among all the functions
$\left(u^{n},p^{n},f^{n}\right)$ satisfying \eqref{Pn}, the image
of $\delta F\left(\bar{u}^{n},\bar{p}^{n},\bar{f}^{n}\right)$ is
equal to $H \times H$.
\end{theorem}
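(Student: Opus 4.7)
I would proceed by direct componentwise computation of $\tfrac{d}{ds}F(u^{n}+se,p^{n}+sw,f^{n}+sh)\big|_{s=0}$. The linear pieces (the finite-difference quotient, the source $-f^{n}$, and the trace $\gamma_{0}$) differentiate trivially, producing the $e$, $-h$, $\gamma_{0}e$, and $\gamma_{0}w$ contributions. For the nonlinear pieces $\Delta\varphi(u^{n})$, $\mathrm{div}(g(u^{n})\nabla p^{n})$ and $\mathrm{div}(d(u^{n})\nabla p^{n})$ I would expand by the chain rule; for instance
\[
\Delta\varphi(u^{n}+se)-\Delta\varphi(u^{n}) = \mathrm{div}\bigl((\varphi'(u^{n}+se)-\varphi'(u^{n}))\nabla u^{n}\bigr) + s\,\mathrm{div}\bigl(\varphi'(u^{n}+se)\nabla e\bigr),
\]
and dividing by $s$, the limit as $s\to 0$ yields $\mathrm{div}(\varphi''(u^{n})e\nabla u^{n})+\mathrm{div}(\varphi'(u^{n})\nabla e)$; analogous expansions for $g$ and $d$ generate the remaining contributions to $\xi_{1}$ and $\xi_{3}$. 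To pass to the limit in the target norm of $L^{2q}(\Omega)$, I would invoke the Sobolev embedding $W=\stackrel{\circ}{W}_{2q}^{2}(\Omega)\hookrightarrow C^{1}(\bar\Omega)$ in dimension two, which places $\nabla u^{n}$ and $\nabla p^{n}$ in $L^{\infty}$; combined with the uniform bounds on $\varphi',\varphi'',\varphi''',g,g',d,d'$ from (H1) and (H3), dominated convergence then finishes the passage to the limit.

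\textbf{Step 2: Surjectivity of $\delta F$ at the optimum.} Given an arbitrary target $((\phi_{1},\psi_{1}),(\phi_{2},\psi_{2}))\in H\times H$, my plan is to construct a preimage $(e,w,h)\in W\times W\times\Upsilon$ in three steps. First, lift $\psi_{2}$ to some $w\in W$ with $\gamma_{0}w=\psi_{2}$, which is possible by surjectivity of the trace map onto $\stackrel{\circ}{W}_{2q}^{2-1/q}(\Omega)$. Second, with $w$ fixed, solve the linear elliptic boundary-value problem
\[
e-\mathrm{div}\bigl(\varphi'(\bar u^{n})\nabla e\bigr)-\mathrm{div}\bigl(\varphi''(\bar u^{n})e\nabla\bar u^{n}\bigr)-\mathrm{div}\bigl(g'(\bar u^{n})e\nabla\bar p^{n}\bigr) = \phi_{1}+\mathrm{div}\bigl(g(\bar u^{n})\nabla w\bigr)
\]
for $e\in W$ with $\gamma_{0}e=\psi_{1}$. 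Third, define the slack variable $h\in\Upsilon$ algebraically by
\[
h:=w-\mathrm{div}\bigl(d(\bar u^{n})\nabla w\bigr)-\mathrm{div}\bigl(d'(\bar u^{n})e\nabla\bar p^{n}\bigr)-\phi_{2},
\]
which lies in $L^{2q}(\Omega)$ because each summand does. By construction, $\delta F(\bar u^{n},\bar p^{n},\bar f^{n})(e,w,h)$ matches the prescribed target.

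\textbf{Main obstacle.} The crux is the elliptic solvability for $e$ in the strong space $\stackrel{\circ}{W}_{2q}^{2}(\Omega)$. Its principal part $e\mapsto-\mathrm{div}(\varphi'(\bar u^{n})\nabla e)$ is uniformly elliptic, using positivity of $\varphi'$ in the dead-oil regime together with the upper bound of (H1); the lower-order coefficients $\varphi''(\bar u^{n})\nabla\bar u^{n}$ and $g'(\bar u^{n})\nabla\bar p^{n}$ belong to $L^{\infty}(\Omega)$ by the embedding $W\hookrightarrow C^{1}(\bar\Omega)$ and the bounds of (H1); and the zero-order term $+e$ shifts the spectrum so that $0$ is not an eigenvalue of the full operator. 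The Fredholm alternative then yields unique solvability, and Agmon--Douglis--Nirenberg/Calder\'on--Zygmund $L^{2q}$-regularity upgrades the weak solution to $W$, provided $q$ is large enough that the Sobolev embedding above holds. Everything else in the argument is chain-rule bookkeeping, and hypothesis (H3) intervenes only to guarantee that the remainder estimates in Step~1 are uniform enough for the $L^{2q}$ dominated-convergence argument.
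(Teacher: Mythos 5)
Your overall structure is sound, and your Step 1 matches the paper's treatment in spirit: the paper does not actually carry out the $s\to 0$ limit either, but instead isolates the boundedness of the candidate derivative as a separate statement (Lemma~\ref{lemma5.2}), expanding every term of $\delta_{u^{n}}F_{1}$ and $\delta_{p^{n}}F_{2}$ and estimating each in $L^{2q}(\Omega)$ by H\"older and Sobolev embeddings; your dominated-convergence remark plays the same role. Where you genuinely diverge is the surjectivity argument. The paper fixes $h=0$ and solves the full \emph{coupled} two-by-two elliptic system \eqref{eq:4.16} for $(e,w)$ simultaneously, invoking the regularity of the optimal solution to place the coefficients in $L^{2q_{0}}(\Omega)$ and $L^{4q_{0}}(\Omega)$ and then citing \cite{lsu,Lions} for unique solvability. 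You instead exploit the free variable $h$: since $h$ enters $\xi_{3}$ linearly with coefficient $-1$ and does not appear in $\xi_{1}$, you may fix $w$ as an arbitrary trace lift, solve a single scalar elliptic equation for $e$ with $div\left(g(\bar{u}^{n})\nabla w\right)$ moved to the right-hand side as data, and then read off $h$ algebraically from the second equation. This decoupling is cleaner than the paper's route and reduces the analytic burden from a coupled system to one scalar equation. The one caveat is that your ``main obstacle'' paragraph overstates what the stated hypotheses deliver: (H1) bounds $|\varphi'|$ from above but does not bound $\varphi'$ away from zero, so uniform ellipticity of $e\mapsto -div\left(\varphi'(\bar{u}^{n})\nabla e\right)$ is not actually guaranteed by (H1)--(H3); and the zeroth-order coefficient produced by expanding $-div\left(\varphi''(\bar{u}^{n})e\nabla \bar{u}^{n}\right)-div\left(g'(\bar{u}^{n})e\nabla \bar{p}^{n}\right)$ is sign-indefinite, so the $+e$ term does not by itself exclude a nontrivial kernel and the Fredholm alternative alone does not close the argument. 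To be fair, the paper's own assertion of unique solvability of \eqref{eq:4.16} rests on exactly the same unverified non-degeneracy, so this is an inherited weakness of the problem's hypotheses rather than a new gap introduced by your decomposition.
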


To prove Theorem~\ref{thm5.1} we make use of the following lemma.

\begin{lemma}
\label{lemma5.2} The operator $\delta F(u^{n},p^{n},f^{n}) : W \times W \times
\Upsilon \longrightarrow H \times H$ is  linear and bounded.
\end{lemma}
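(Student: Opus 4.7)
The plan is to read off linearity directly from the expression of $\delta F(u^n, p^n, f^n)$ and then estimate each of the four components $\xi_1, \xi_2, \xi_3, \xi_4$ in the corresponding factor of $H \times H = L^{2q}(\Omega) \times \stackrel{\circ}{W}_{2q}^{2 - 1/q}(\Omega) \times L^{2q}(\Omega) \times \stackrel{\circ}{W}_{2q}^{2 - 1/q}(\Omega)$.

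For linearity I would observe that each $\xi_i$ is obtained from $(e,w,h)$ by multiplication by the fixed coefficient functions $\varphi'(u^n)$, $\varphi''(u^n)\nabla u^n$, $g(u^n)$, $g'(u^n)\nabla p^n$, $d(u^n)$, $d'(u^n)\nabla p^n$, followed by $div$, by the trace $\gamma_0$, or by the identity, and then summed. Each such building block is $\mathbb{R}$-linear in $(e,w,h)$, and a finite sum of linear operators is linear, so $\delta F$ is linear.

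For boundedness of $\xi_1$ (and analogously $\xi_3$) in $L^{2q}(\Omega)$, I would expand each divergence by the product rule, for instance
\begin{equation*}
div\bigl(\varphi'(u^n)\nabla e\bigr) = \varphi''(u^n)\, \nabla u^n \cdot \nabla e + \varphi'(u^n)\, \Delta e \, ,
\end{equation*}
and then exploit (H1) and (H3), which give uniform bounds on $\varphi'$, $\varphi''$, $\varphi'''$, $g$, $g'$, $d$, $d'$, together with the two-dimensional Sobolev embedding $W_{2q}^{2}(\Omega) \hookrightarrow W^{1,\infty}(\Omega)$, valid since $\Omega \subset \mathbb{R}^2$ and $q > 1$, to control $\|\nabla u^n\|_{\infty,\Omega}$ and $\|\nabla p^n\|_{\infty,\Omega}$ by $\|u^n\|_W + \|p^n\|_W$. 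H\"older's inequality then produces termwise bounds such as
\begin{equation*}
\bigl\|\varphi''(u^n)\, \nabla u^n \cdot \nabla e\bigr\|_{2q,\Omega} \le c_3\, \|\nabla u^n\|_{\infty,\Omega}\, \|\nabla e\|_{2q,\Omega} \le C(u^n)\, \|e\|_W \, ,
\end{equation*}
and the remaining divergence terms built from $g$, $g'$, $d$, $d'$ are handled identically. Adding the trivial bounds $\|e\|_{2q,\Omega} \le \|e\|_W$ and $\|h\|_{2q,\Omega} = \|h\|_\Upsilon$ yields
\begin{equation*}
\|\xi_1\|_{2q,\Omega} + \|\xi_3\|_{2q,\Omega} \le C \bigl(\|e\|_W + \|w\|_W + \|h\|_\Upsilon\bigr) \, ,
\end{equation*}
with $C$ depending on $\|u^n\|_W + \|p^n\|_W$. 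For the remaining components $\xi_2 = \gamma_0 e$ and $\xi_4 = \gamma_0 w$ I would invoke the continuity of the trace $\gamma_0 : \stackrel{\circ}{W}_{2q}^{2}(\Omega) \to \stackrel{\circ}{W}_{2q}^{2 - 1/q}(\Omega)$ to conclude $\|\xi_2\|_{W_{2q}^{2-1/q}(\Omega)} \le C \|e\|_W$, and similarly for $\xi_4$.

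The main obstacle is precisely the choice of embedding that justifies the product estimates: the first-order coefficients in $\xi_1$ and $\xi_3$ carry $\nabla u^n$ and $\nabla p^n$, so membership in $L^{2q}(\Omega)$ is not automatic and genuinely depends on $\Omega \subset \mathbb{R}^2$ together with $q > 1$ via $W_{2q}^{2}(\Omega) \hookrightarrow W^{1,\infty}(\Omega)$. Once this embedding is in place, the rest is a routine sequence of applications of (H1)--(H3), H\"older's inequality, and the standard trace theorem.
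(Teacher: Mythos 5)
Your proposal is correct and follows essentially the same route as the paper: expand every divergence by the product rule, bound the coefficient functions via (H1)--(H3), and control the gradient factors through the two-dimensional Sobolev embeddings of $W=\stackrel{\circ}{W}_{2q}^{2}(\Omega)$ combined with H\"older's inequality, exactly as in the paper's sample estimate of the term $g''(u^{n})e\,\nabla u^{n}\cdot\nabla p^{n}$. The only cosmetic differences are that you invoke $W_{2q}^{2}(\Omega)\hookrightarrow W^{1,\infty}(\Omega)$ where the paper pairs the exponents $\tfrac{4q}{2-q}$ and $4$, and that you spell out the continuity of the trace components $\xi_2,\xi_4$, which the paper leaves implicit.
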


\begin{proof}[Lemma~\ref{lemma5.2}]
For all $(e,w,h) \in W \times W \times \Upsilon $
\begin{multline*}
\delta_{u^{n}}F_{1}(u^{n}, p^{n}, f^{n})(e, w, h) \\
= e - div\left( \varphi'(u^{n}) \nabla e\right)- div\left(
\varphi''(u^{n})e \nabla u^{n} \right) \\
- div\left( g(u^{n}) \nabla
w\right)- div\left( g'(u^{n})e \nabla p^{n} \right) \\
= e- \varphi'(u^{n})\triangle e -\varphi''(u^{n}) \nabla u^{n}.
\nabla e -\varphi''(u^{n})e \triangle u^{n}\\ -\varphi''(u^{n})
\nabla e. \nabla u^{n} - \varphi'''(u^{n})e |\nabla u^{n}|^{2}
-g(u^{n})\triangle w- g'(u^{n}) \nabla u^{n}. \nabla w\\
- g'(u^{n}) e \triangle p^{n}- g'(u^{n})\nabla e. \nabla p^{n} -g''(u^{n})e \nabla u^{n}.
\nabla p^{n} \, ,
\end{multline*}
where $\delta_{u^{n}} F$ is the G\^ateaux derivative of $F$ with
respect to $u^{n}$. Using our hypotheses we have
\begin{equation*}
\begin{split}
\|g''(u^{n}) e \nabla u^{n}. \nabla p^{n}\|_{2q, \Omega} &\leq
\|e\|_{\infty, \Omega} \|
\nabla u^{n}. \nabla p^{n}\|_{2q, \Omega}\\
&\leq  \|e\|_{\infty, \Omega} \|\nabla u^{n}\|_{\frac{4q}{2-q},
\Omega} \|\nabla
p^{n}\|_{4, \Omega}\\
&\leq c \|u^{n}\|_{W} \|p^{n}\|_{W} \|e\|_{W}\, .
\end{split}
\end{equation*}
Evaluating each term of $\delta_{u^{n}}F_{1}$, we obtain
\begin{multline}
\label{eq:4.14} \|\delta_{u^{n}}F_{1}(u^{n}, p^{n}, f^{n})(e, w, h) \|_{2q, Q_{T}} \\
\leq c \left( \|u^{n}\|_{W}, \|p^{n}\|_{W},
\|f^{n}\|_{\Upsilon} \right) \left( \|e\|_{W}+ \|w\|_{W}+
\|h\|_{\Upsilon} \right)\, .
\end{multline}
In a similar way, we have for all $(e,w,h) \in W \times W \times
\Upsilon$ that
\begin{multline*}
\delta_{p^{n}}F_{2}(u^{n}, p^{n}, f^{n})(e, w, h)= w - div\left(
d(u^{n})
\nabla w\right)- div\left( d'(u^{n})e \nabla p^{n} \right)-h\\
= w- d(u^{n})\triangle w -d'(u^{n}) \nabla u^{n}. \nabla w  -
d'(u^{n})e \triangle p^{n} \\
-d'(u^{n}) \nabla e. \nabla u^{n}-
d'(u^{n})e \nabla u^{n}. \nabla p^{n} -h \, ,
\end{multline*}
with $\delta_{p^{n}} F$ the G\^ateaux derivative of $F$ with
respect to $p^{n}$. Then, using again our hypotheses, we obtain
that
\begin{multline}
\label{eq:4.12} \|\delta_{p^{n}}F_{2}(u^{n}, p^{n}, f^{n})(e, w, h) \|_{2q,
\Omega}\leq \|w\|_{2q, \Omega}+
\| \nabla w \|_{2q, \Omega}+ c \| \triangle w \|_{2q, \Omega}\\
+ c  \|\nabla u^{n}. \nabla w\|_{2q, \Omega}
+c \| e \triangle p^{n} \|_{2q, \Omega}\\
+ c  \|\nabla e. \nabla u^{n} \|_{2q, \Omega} + c  \| e \nabla
u^{n}. \nabla p^{n}\|_{2q, \Omega}+ \|h\|_{2q, \Omega} \, .
\end{multline}
Applying similar arguments to all terms of \eqref{eq:4.12}, we
then have
\begin{multline}
\label{eq:4.13} \|\delta_{p^{n}}F_{2}(u^{n}, p^{n}, f^{n})(e, w,
h) \|_{2q, \Omega} \\
\leq c \left( \|u^{n}\|_{W}, \|p^{n}\|_{W},
\|f^{n}\|_{\Upsilon} \right) \left( \|e\|_{W} +\|w\|_{W}+
\|h\|_{\Upsilon} \right)\, .
\end{multline}
Consequently, by \eqref{eq:4.14} and \eqref{eq:4.13} we can write
\begin{multline*}
\|\delta F(u^{n}, p^{n}, f^{n})(e, w, h) \|_{H\times H\times
\Upsilon} \\
\leq c \left( \|u^{n}\|_{W}, \|p^{n}\|_{W},
\|f^{n}\|_{\Upsilon} \right) \left( \|e\|_{W}+ \|w\|_{W}+
\|h\|_{\Upsilon} \right)\, .
\end{multline*}
\qed
\end{proof}

\begin{proof}[Theorem~\ref{thm5.1}]
In order to show that the image of $\delta F(\overline{u},
\overline{p}, \overline{f})$ is equal to $H \times H$, we need to
prove that there exists $(e, w, h) \in W \times W \times \Upsilon$
such that
\begin{equation}
\label{eq:4.15}
\begin{gathered}
\begin{split}
e - div\left( \varphi'(\overline{u^{n}}) \nabla e\right)
&- div\left( \varphi''(\overline{u^{n}})e \nabla \overline{u^{n}} \right) \\
&- div\left( g(\overline{u^{n}}) \nabla w\right)- div\left(
g'(\overline{u^{n}})e \nabla \overline{p^{n}} \right)= \alpha \, ,
\end{split} \\
w - div\left( d(\overline{u^{n}})
\nabla w\right)- div\left( d'(\overline{u^{n}})e
 \nabla \overline{p^{n}} \right)-h=
\beta  \, , \\
\left.e\right|_{\partial \Omega} = 0 \,
 , \quad \left.e\right|_{t=0} = b\, , \\
\left.w\right|_{\partial \Omega} = 0 \, ,  \quad
\left.w\right|_{t=0} = a \, ,
\end{gathered}
\end{equation}
for any $(\alpha, a)$ and $(\beta, b) \in H$. Writing the system
\eqref{eq:4.15} for $h=0$ as
\begin{equation}
\label{eq:4.16}
\begin{gathered}
e - \varphi'(\overline{u^{n}})\triangle e -2
\varphi''(\overline{u^{n}})\nabla \overline{u^{n}}. \nabla e-
\varphi''(\overline{u^{n}})e \triangle \overline{u^{n}}-
\varphi'''(\overline{u^{n}})e |\nabla \overline{u^{n}}|^{2} \, ,\\
\begin{split}
-g(\overline{u^{n}}) \triangle w &- g'(\overline{u^{n}}) \nabla
\overline{u^{n}}. \nabla w- g'(\overline{u^{n}}) e \triangle
\overline{p^{n}} \\
&-g'(\overline{u^{n}})\nabla \overline{p^{n}}. \nabla e -
g''(\overline{u^{n}}) e \nabla \overline{u^{n}}. \nabla
\overline{p^{n}} = \alpha \, ,
\end{split} \\
\begin{split}
w - d(\overline{u^{n}}) \triangle w &-d'(\overline{u^{n}}) \nabla
\overline{u^{n}}. \nabla w - d'(\overline{u^{n}}) e \triangle
\overline{p^{n}} \\
&- d'(\overline{u^{n}}) \nabla \overline{u^{n}}. \nabla
\overline{e} -d'(\overline{u^{n}}) e \nabla \overline{u^{n}}.
\nabla \overline{p^{n}} =\beta  \, ,
\end{split} \\
\left.e\right|_{\partial \Omega} = 0 \,
 , \quad \left.e\right|_{t=0} = b\, , \\
\left.w\right|_{\partial \Omega} = 0 \, ,  \quad
\left.w\right|_{t=0} = a \, ,
\end{gathered}
\end{equation}
it follows from the regularity of the optimal solution that
$\varphi''(\overline{u^{n}}) \triangle \overline{u^{n}}$,
$\varphi'''(\overline{u^{n}}) |\nabla \overline{u^{n}}|^{2}$,
$g'(\overline{u^{n}})  \triangle \overline{p^{n}}$,
$g''(\overline{u^{n}}) \nabla \overline{u^{n}}. \nabla
\overline{p^{n}}$, $d'(\overline{u^{n}}) \triangle
\overline{p^{n}}$, and $d'(\overline{u^{n}}) \nabla
\overline{u^{n}}. \nabla \overline{p^{n}}$ belong to
$L^{2q_{0}}(\Omega)$; $\varphi''(\overline{u^{n}})\nabla
\overline{u^{n}}$, $g'(\overline{u^{n}})\nabla \overline{u^{n}}$,
$g'(\overline{u^{n}})\nabla \overline{p^{n}}$, and
$d'(\overline{u^{n}})\nabla \overline{u^{n}}$ belong to
$L^{4q_{0}}(\Omega)$. This ensures, in view of the results of \cite{lsu,Lions},
existence of a unique solution of the system \eqref{eq:4.16}. Hence,
there exists a $(e, w, 0)$ verifying \eqref{eq:4.15}. We conclude
that the image of $\delta F$ is equal to $H \times H$. \qed
\end{proof}


\subsection{Necessary optimality condition}

We consider the cost functional $J: W\times  W \times \Upsilon
\rightarrow \mathbb{R}$ \eqref{eq:cf1} and the Lagrangian
$\mathcal{L}$ defined by
 $$
 \mathcal{L}\left(u^{n}, p^{n}, f^{n}, p_{1}, e_{1}, a, b\right)=
 J\left(u^{n}, p^{n}, f^{n} \right)+ \left\langle F(u^{n}, p^{n}, f^{n}),
 \left(\begin{array}{cc}  p_{1} & a \\
                  e_{1}, & b
\end{array}\right) \right\rangle\, ,
$$
where the bracket $\langle \cdot, \cdot \rangle$ denotes the
duality between $H$ and $H'$.
\begin{theorem}
\label{thm5.2}
Under hypotheses (H1)--(H3), if $\left(\overline{u^{n}},
\overline{p^{n}}, \overline{f^{n}}\right)$ is an optimal solution to
the problem of minimizing \eqref{eq:cf1} subject to \eqref{Pn}, then
there exist functions $(\overline{e_{1}}, \overline{p_{1}}) \in
W_{2}^{2}(\Omega) \times W_{2}^{2}(\Omega)$ satisfying the following
conditions:
\begin{equation}
\label{eq:4.19}
\begin{gathered}
\begin{split}
\overline{e_{1}} + div\left( \varphi'(\overline{u^{n}}) \nabla
e_{1}\right) &-d'(\overline{u^{n}}) \nabla \overline{p^{n}}.
\nabla \overline{p_{1}} -\varphi''(\overline{u^{n}}) \nabla
\overline{u^{n}}. \nabla
\overline{e_{1}} \\
&- g'(\overline{u^{n}})\nabla \overline{p^{n}}. \nabla
\overline{e_{1}}= \tau \sum_{n=1}^{N}(\overline{u^{n}}-U)\, ,
\end{split} \\
\left.\overline{e_{1}}\right|_{\partial \Omega} = 0 \,
 , \quad \left.\overline{e_{1}}\right|_{t=T} = 0\, ,\\
\overline{p_{1}} + div\left( d(\overline{u^{n}}) \nabla
\overline{p_{1}}\right)+ div\left( g(\overline{u^{n}}) \nabla
\overline{e_{1}}\right)=\tau \sum_{n=1}^{N} (\overline{p^{n}}-P) \, , \\
\left.\overline{p_{1}}\right|_{\partial \Omega} = 0 \, , \quad
\left.\overline{p_{1}}\right|_{t=T} = 0 \, , \\
 q_{0} \beta_{1}\tau \sum_{n=1}^{N}|\overline{f^{n}}|^{2q_{0}-2}\overline{f^{n}}
  = \overline{p_{1}} \, .
\end{gathered}
\end{equation}
\end{theorem}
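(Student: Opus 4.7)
The plan is to apply the classical Lagrange multiplier principle in Banach spaces (Lyusternik-type theorem). Theorem~\ref{thm5.1} supplies the two hypotheses required for this principle: $F$ is G\^ateaux differentiable at the optimum, and the image of $\delta F(\bar u^n, \bar p^n, \bar f^n)$ equals all of $H \times H$. This surjectivity is precisely the constraint qualification which guarantees existence of a non-trivial multiplier $(\bar p_1,\bar e_1)\in H'\times H'$ (whose regularity will be upgraded a posteriori) such that the G\^ateaux differential of $\mathcal{L}$ at $(\bar u^n,\bar p^n,\bar f^n,\bar p_1,\bar e_1,a,b)$ vanishes along every admissible direction $(e,w,h)\in W\times W\times \Upsilon$.

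The three necessary conditions are then extracted from the stationarity of $\mathcal{L}$ taken separately in $f^n$, $u^n$ and $p^n$. Differentiation in the direction $h\in\Upsilon$ is the easiest: the variable $f^n$ enters $J$ only through $\frac{\tau\beta_1}{2}\sum_{n=1}^{N}\|f^n\|_{2q_0,\Omega}^{2q_0}$ and enters $F$ only through the linear term $-f^n$ in the second row; pairing $\delta_{f^n}J\cdot h$ with $-\langle h,\bar p_1\rangle$ and invoking arbitrariness of $h$ delivers at once the pointwise identity $q_0\beta_1\tau\sum_{n=1}^{N}|\bar f^n|^{2q_0-2}\bar f^n=\bar p_1$. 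Differentiation in directions $e,w\in W$ uses the explicit formula for $\delta F$ given by Theorem~\ref{thm5.1}, after which each component of $\delta F_1$ is paired against $\bar e_1$ and each component of $\delta F_2$ against $\bar p_1$.

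The adjoint equations in \eqref{eq:4.19} are obtained from these pairings by integration by parts. Using Green's formula twice, the second-order operators $\mathrm{div}\bigl(\varphi'(\bar u^n)\nabla e\bigr)$, $\mathrm{div}\bigl(g(\bar u^n)\nabla w\bigr)$ and $\mathrm{div}\bigl(d(\bar u^n)\nabla w\bigr)$ are transferred from $e,w$ onto $\bar e_1,\bar p_1$, producing the leading terms $\mathrm{div}(\varphi'(\bar u^n)\nabla\bar e_1)$, $\mathrm{div}(g(\bar u^n)\nabla\bar e_1)$ and $\mathrm{div}(d(\bar u^n)\nabla\bar p_1)$. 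The first-order cross terms of the form $\varphi''(\bar u^n)e\nabla\bar u^n$, $g'(\bar u^n)e\nabla\bar p^n$ and $d'(\bar u^n)e\nabla\bar p^n$ integrate by parts once to yield the coefficients $\varphi''(\bar u^n)\nabla\bar u^n\cdot\nabla\bar e_1$, $g'(\bar u^n)\nabla\bar p^n\cdot\nabla\bar e_1$ and $d'(\bar u^n)\nabla\bar p^n\cdot\nabla\bar p_1$ visible in the statement. Homogeneous Dirichlet traces of $\bar e_1,\bar p_1$ follow from the freedom of $e,w$ on $\partial\Omega$; the terminal conditions $\bar e_1|_{t=T}=\bar p_1|_{t=T}=0$ arise from Abel summation by parts on the discrete increments $(u^{n+1}-u^n)/\tau$ and $(p^{n+1}-p^n)/\tau$, which is also what produces the $\tau\sum_{n=1}^{N}$ aggregation on the right-hand sides. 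The $W_2^2$-regularity of $\bar e_1,\bar p_1$ is then a consequence of standard linear elliptic regularity applied step-by-step to the adjoint system, using that the coefficients are bounded by (H1)--(H3) and the right-hand sides lie in $L^2(\Omega)$; see \cite{lsu,Lions}.

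The main technical obstacle is the careful bookkeeping of all the first-order cross terms during integration by parts: one has to transpose the perturbation operator $\delta F$ term by term with the correct signs so that the resulting adjoint operator is exactly the one displayed in \eqref{eq:4.19}, and one has to check that every boundary integral and every discrete end-point trace produced by the summations by parts indeed vanishes. Once these identities are in place, the existence statement for $(\bar e_1,\bar p_1)\in W_2^2(\Omega)\times W_2^2(\Omega)$ follows from the parabolic/elliptic theory already invoked in \cite{ds1}.
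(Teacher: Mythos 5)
Your overall strategy --- the Lagrange multiplier rule in Banach spaces with the surjectivity of $\delta F$ from Theorem~\ref{thm5.1} as constraint qualification, followed by stationarity of $\mathcal{L}$ in the directions $h$, $e$, $w$ and integration by parts to read off the adjoint system --- is exactly the paper's. The extraction of the condition on $\overline{f^{n}}$ by varying $h$ alone, and the identification of the adjoint differential operators, also match.

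The gap is in the regularity upgrade, which you dispose of with ``standard linear elliptic regularity applied step-by-step to the adjoint system.'' The multiplier $\left((\overline{p_{1}},\overline{a}),(\overline{e_{1}},\overline{b})\right)$ produced by the abstract principle lives in $H'\times H'$, i.e.\ it is a priori only a functional on $L^{2q}(\Omega)\times \stackrel{\circ}{W}_{2q}^{2-\frac{1}{q}}(\Omega)$, and the stationarity identity gives the adjoint equation only in the transposed (very weak) sense in which every derivative sits on the test functions $e,w$. You cannot move $div\left(\varphi'(\overline{u^{n}})\nabla e\right)$ onto $\overline{e_{1}}$ by Green's formula, nor invoke elliptic regularity for $\overline{e_{1}}$, until you know $\overline{e_{1}}$ is at least a function with the requisite weak derivatives --- which is precisely what is to be proved. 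The paper closes this circle with a comparison argument your proposal does not contain: it solves the adjoint system \eqref{eq:4.22} independently, obtaining a genuine $W_{2}^{2}$ pair $(e_{1},p_{1})$ by elliptic theory, and then tests the stationarity identity \eqref{eq:4.21} with the solution $(e,w)$ of the auxiliary problem \eqref{eq:4.23} whose right-hand sides are $sign(e_{1}-\overline{e_{1}})$ and $sign(p_{1}-\overline{p_{1}})$; this forces $\int_{\Omega}|e_{1}-\overline{e_{1}}|+|p_{1}-\overline{p_{1}}|\,dx=0$, hence the abstract multiplier coincides with the regular solution, and as a by-product identifies $\overline{a},\overline{b}$ with the traces of $\overline{e_{1}},\overline{p_{1}}$. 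Without this step (or an explicitly worked transposition argument) the claim that $(\overline{e_{1}},\overline{p_{1}})\in W_{2}^{2}(\Omega)\times W_{2}^{2}(\Omega)$ is unsupported. A smaller mismatch: you attribute the terminal conditions $\left.\overline{e_{1}}\right|_{t=T}=\left.\overline{p_{1}}\right|_{t=T}=0$ to an Abel summation on the discrete increments, whereas in the paper they are simply imposed as data of the auxiliary system \eqref{eq:4.22}; no summation by parts in $n$ is actually performed.
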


\begin{proof}
Let $ \left( \overline{u^{n}}, \overline{p^{n}}, \overline{f^{n}}
\right)$ be an optimal solution to the problem of minimizing
\eqref{eq:cf1} subject to \eqref{Pn}. It is well known (\textrm{cf.
e.g.} \cite{fur}) that there exist Lagrange multipliers $\left(
(\overline{p_{1}}, \overline{a}), (\overline{e_{1}}, \overline{b})
\right) \in H' \times H'$ verifying
$$ \delta_{(u^{n}, p^{n}, f^{n})}\mathcal{L}
\left(\overline{u^{n}}, \overline{p^{n}}, \overline{f^{n}},
\overline{p_{1}}, \overline{e_{1}}, \overline{a}, \overline{b}
\right)(e, w, h)= 0 \, \quad \forall (e, w, h)\in W \times W \times
\Upsilon, $$ with $\delta_{(u^{n}, p^{n}, f^{n})}\mathcal{L}$ the
G\^{a}teaux derivative of $\mathcal{L}$ with respect to $(u^{n},
p^{n}, f^{n})$. This  leads to the following system:
\begin{equation*}
\begin{gathered}
\tau \sum_{n=1}^{N}  \int_{\Omega} \left( (\overline{u^{n}}-U)e
+(\overline{p^{n}}-P)w+ q_{0}
\beta_{1}|\overline{f^{n}}|^{2q_{0}-2}\overline{f^{n}}h
 \right)\, dx \\
\begin{split}
-\int_{\Omega} \Biggl( \Bigl( e &- div\left(
\varphi'(\overline{u^{n}}) \nabla e \right)- div\left(
\varphi''(\overline{u^{n}})e \nabla \overline{u^{n}}\right)\\
& - div\left( g(\overline{u^{n}}) \nabla w \right) - div\left(
g'(\overline{u^{n}}) e \nabla \overline{p^{n}}\right)
\Bigr)\overline{e_{1}} \Biggr) \, dx
\end{split} \\
-\int_{\Omega} \left( w - div\left( d(\overline{u^{n}}) \nabla w
\right)- div\left( d'(\overline{u^{n}}) e\nabla
\overline{p^{n}}\right)-h
\right)\overline{p_{1}}  \, dx  \\
-\langle \gamma_{0} e, \overline{a}\rangle + -\langle \gamma_{0} w,
\overline{b}\rangle =0 \, \quad \forall (e, w, h)\in W \times W
\times \Upsilon.
\end{gathered}
\end{equation*}
The above system is equivalent to the following one:
\begin{equation}
\label{eq:4.20}
\begin{gathered}
\int_{\Omega} \Biggl(\tau \sum_{n=1}^{N}
(\overline{u^{n}}-U)e-div\left( d'(\overline{u^{n}}) e\nabla
\overline{p^{n}}\right)\overline{p_{1}}+e \, \overline{e_{1}} -
 div\left(
\varphi'(\overline{u^{n}}) \nabla e \right) \overline{e_{1}} \\
-div\left( \varphi''(\overline{u^{n}})e \nabla
\overline{u^{n}}\right) \overline{e_{1}} -div\left(
g'(\overline{u^{n}}) e \nabla
\overline{p^{n}}\right)\overline{e_{1}} \Biggr) \,
 dx   \\
 +\int_{\Omega} \left( \tau \sum_{n=1}^{N}(\overline{p^{n}}-P)w +w \,
\overline{p_{1}}- div\left( d(\overline{u^{n}}) \nabla w \right)
\overline{p_{1}} - div\left( g(\overline{u^{n}}) \nabla w
\right)\overline{e_{1}} \right) \, dx   \\
 +\int_{\Omega} \left(q_{0}
\beta_{1}\tau
\sum_{n=1}^{N}|\overline{f^{n}}|^{2q_{0}-2}\overline{f^{n}}h -
\overline{p_{1}}
h \right)\, dx   \\
 + \langle \gamma_{0} e, \overline{a}\rangle +
\langle \gamma_{0} w, \overline{b}\rangle =0 \, \quad \forall (e, w,
h)\in W \times W \times \Upsilon.
\end{gathered}
\end{equation}
In others words, we have
\begin{equation}
\label{eq:4.21}
\begin{gathered}
\begin{split}
\int_{\Omega} \Bigg( \tau \sum_{n=1}^{N}(\overline{u^{n}}-U) &+
d'(u)\nabla \overline{p^{n}}. \nabla \overline{p_{1}}- \overline{
e_{1}} - div\left( \varphi'(\overline{u^{n}}) \nabla
\overline{e_{1}} \right) \\
&+ \varphi''(\overline{u^{n}}) \nabla \overline{u^{n}}. \nabla
\overline{e_{1}}+g'(u^{n})\nabla \overline{p^{n}}. \nabla
\overline{e_{1}} \Biggr) e \, dx
\end{split} \\
+\int_{\Omega} \left( \tau \sum_{n=1}^{N}(\overline{p^{n}}-P)
+\overline{p_{1}} - div\left( d(\overline{u^{n}}) \nabla
\overline{p_{1}} \right) - div\left( g(\overline{u^{n}}) \nabla
\overline{e_{1}} \right) \right)w
\, dx   \\
+\int_{\Omega} \left(q_{0} \beta_{1}\tau
\sum_{n=1}^{N}|\overline{f^{n}}|^{2q_{0}-2}\overline{f^{n}}h -
\overline{p_{1}}
h \right)\, dx   \\
+ \langle \gamma_{0} e, \overline{a}\rangle + \langle \gamma_{0} w,
\overline{b}\rangle =0 \, \quad
 \forall (e, w, h)\in W \times W \times \Upsilon.
\end{gathered}
\end{equation}
Consider now the system
\begin{equation}
\label{eq:4.22}
\begin{gathered}
\begin{split}
e_{1} +div\left( \varphi'(\overline{u^{n}}) \nabla e_{1} \right)
&- d'(\overline{u^{n}})\nabla \overline{p^{n}}. \nabla p_{1}
-\varphi''(\overline{u^{n}}) \nabla \overline{u^{n}}. \nabla e_{1}\\
&-g'(\overline{u^{n}})\nabla \overline{p^{n}}. \nabla e_{1}= \tau
\sum_{n=1}^{N} (\overline{u^{n}}-U)  \, ,
\end{split} \\
p_{1}+div\left( d(\overline{u^{n}}) \nabla p_{1} \right)
+div\left( g(\overline{u^{n}}) \nabla e_{1} \right)= \tau\sum_{n=1}^{N}(\overline{p^{n}}-P), \\
 \left.e_{1}\right|_{\partial \Omega}
 =\left.p_{1}\right|_{\partial \Omega}  = 0 \, ,
\quad \left.e_{1}\right|_{t=T} = \left.p_{1}\right|_{t=T} = 0 \, ,
\end{gathered}
\end{equation}
with unknowns $(e_{1}, p_{1})$ which is uniquely solvable in
$W_{2}^{2}(\Omega) \times W_{2}^{2}(\Omega)$ by the theory of
elliptic equations \cite{lsu}. The problem of finding $(e, w)
\in W \times W$ satisfying
\begin{equation}
\label{eq:4.23}
\begin{gathered}
\begin{split}
e -div\left( \varphi'(\overline{u^{n}}) \nabla e \right) &-
div\left( \varphi''(\overline{u^{n}}) e \nabla \overline{u^{n}}
\right)- div\left( g(\overline{u^{n}}) \nabla w \right)\\
&- div\left( g'(\overline{u^{n}})e \nabla \overline{p^{n}}
\right)= sign(e_{1}- \overline{e_{1}})\, ,
\end{split} \\
w -div\left(d(\overline{u^{n}}) \nabla w \right)- div\left(
d'(\overline{u^{n}})e \nabla \overline{p^{n}} \right)=
sign(p_{1}- \overline{p_{1}})\, , \\
\gamma_{0}e= \gamma_{0}w= 0 \, ,
\end{gathered}
\end{equation}
is also uniquely solvable on $W_{2q}^{2}(\Omega)\times
W_{2q}^{2}(\Omega)$. Let us choose $h=0$ in \eqref{eq:4.21} and multiply \eqref{eq:4.22} by $(e, w)$. Then, integrating by parts and making the difference with \eqref{eq:4.21} we obtain:
\begin{equation}
\label{eq:4.24}
\begin{gathered}
\begin{split}
\int_{\Omega} \Bigl(e -div\left( \varphi'(\overline{u^{n}}) \nabla
e \right) &- div\left( \varphi''(\overline{u^{n}}) e \nabla
\overline{u^{n}} \right)- div\left( g(\overline{u^{n}}) \nabla w
\right) \\
&- div\left( g'(\overline{u^{n}})e \nabla \overline{p^{n}} \right)
\Bigr)(e_{1}- \overline{e_{1}})\, dx
\end{split} \\
+\int_{\Omega}\left( w -div\left(d(\overline{u^{n}}) \nabla w
\right)- div\left( d'(\overline{u^{n}})e \nabla \overline{p^{n}}
\right)
\right)(p_{1}- \overline{p_{1}})\, dx   \\
+\langle \gamma_{0}e,  \gamma_{0}\overline{e_{1} } - \overline{a}
\rangle
 + \langle \gamma_{0}w,  \gamma_{0}\overline{p_{1} } - \overline{b}
\rangle =0 \, \quad \forall (e, w) \in W \times W.
\end{gathered}
\end{equation}
Choosing $(e, w)$ in \eqref{eq:4.24} as the solution of the system
\eqref{eq:4.23}, we have
 \begin{gather*}
\int_{\Omega} sign(e_{1}- \overline{e_{1}})(e_{1}-
\overline{e_{1}})\, dx dt
 + \int_{\Omega} sign(p_{1}- \overline{p_{1}})(p_{1}-
\overline{p_{1}})\, dx  = 0 \, .
\end{gather*}
It follows that $e_{1}=\overline{ e_{1}}$ and  $p_{1}=\overline{
p_{1}}$. Coming back to \eqref{eq:4.24}, we obtain
$\gamma_{0}\overline{e_{1}}= \overline{a}$ and
$\gamma_{0}\overline{p_{1}}= \overline{b}$. On the other hand,
choosing $(e, w)=(0, 0)$ in \eqref{eq:4.21}, we get
$$
\int_{\Omega}\left( \beta_{1} \tau
\sum_{n=1}^{N}|\overline{f^{n}}|^{2q_{0}-2}\overline{f^{n}}
-\overline{p_{1}} \right) h \, dx= 0, \, \forall  h \in \Upsilon.
$$
 Then
\eqref{eq:4.19} follows, which concludes the proof of
Theorem~\ref{thm5.2}.\qed
\end{proof}

We claim that the results we
obtain here are useful for numerical implementations. This is
still under investigation and will be addressed in a forthcoming publication.


\section*{Acknowledgments}

The authors were supported by the \emph{Portuguese Foundation 
for Science and Technology} (FCT) through the \emph{Centre for Research on Optimization and Control} (CEOC) of the University of Aveiro, cofinanced by the European Community fund 
FEDER/POCI 2010. This work was developed under the post-doc 
project SFRH/BPD/20934/2004.



\end{document}